\ProvidesClass{amsart}[2009/07/02 v2.20.1]
\documentclass[leqno,12pt]{amsart} 
\setlength{\textheight}{23cm}
\setlength{\textwidth}{16cm}
\setlength{\oddsidemargin}{0cm}
\setlength{\evensidemargin}{0cm}
\setlength{\topmargin}{0cm}
\usepackage{amssymb}
\usepackage{enumerate}  
\usepackage{graphicx}  
\usepackage{epstopdf}
\usepackage{comment}
\usepackage{array}

 \makeatletter
    
    \@addtoreset{equation}{section}
  \makeatother
%

%
\theoremstyle{plain} 
\newtheorem{theorem}{\indent\sc Theorem}[section] 

\newtheorem{corollary}[theorem]{\indent\sc Corollary}
\newtheorem{proposition}[theorem]{\indent\sc Proposition}

\theoremstyle{definition} 
\newtheorem{definition}[theorem]{\indent\sc Definition}
\newtheorem{remark}[theorem]{\indent\sc Remark}
\newtheorem{example}[theorem]{\indent\sc Example}

%

%

\begin{document}

\title{\uppercase{The Ricci curvature on directed graphs}} 
\author{\textsc{Taiki Yamada}}
\date{} 
%


\footnote{ 
2010 \textit{Mathematics Subject Classification}.
Primary 05C12; Secondary 52C99 .
}


\keywords{ 
Graph theory, Discrete differential geometry
}
\address{ 
Mathematical Institute in Tohoku University \endgraf
Sendai 980-8578 \endgraf
Japan
}
\email{mathyamada@dc.tohoku.ac.jp}



\maketitle

\begin{abstract}
In this paper, we consider the Ricci curvature of a {\em directed} graph, based on Lin-Lu-Yau's definition. We give some properties of the Ricci curvature, including conditions for a directed regular graph to be Ricci-flat. Moreover, we calculate the Ricci curvature of the cartesian product of directed graphs.
\end{abstract}


\section{Introduction} The Ricci curvature is one of the most important concepts in Riemannian geometry. In space physics, Ricci-flat manifolds represent vacuum solutions to an analogue of Einstein's equation for Riemannian manifolds with vanishing cosmological constant. They are used in the theory of general relativity. In mathematics, Calabi-Yau manifolds are Ricci-flat and can be applied to the superstring theory. There are some definitions of generalized Ricci curvature, one of which is Olivier's coarse Ricci curvature (see \cite{Ol1}, \cite{Ol2}). It is formulated by the 1-Wasserstein distance on a metric space $(X, d)$ with a random walk $m=\left\{m_{x} \right\}_{x \in X}$, where $m_{x}$ is a probability measure on $X$. The coarse Ricci curvature is defined as, for two distinct points $x, y \in X$, 
	\begin{eqnarray*}
	\kappa(x, y) := 1 - \cfrac{W(m_{x}, m_{y})}{d(x, y)},
	\end{eqnarray*}
  where $W$($m_{x}, m_{y}$) is the $1$-Wasserstein distance between $m_{x}$ and $m_{y}$.\\
　On the other hand, the graph theory is used to model many types of relations and processes in physical, biological, social and information systems (see \cite{Wa1}, \cite{Wat1} and \cite{Wu1}). A graph $G =(V, E)$ is a pair of the set $V$ of vertices and the set $E$ of edges. If each edge is represented as an ordered pair of vertices, $G$ is called a {\em directed} graph. \\
　In 2010, Lin-Lu-Yau \cite{Yau1} defined the Ricci curvature of an undirected graph by using the coarse Ricci curvature of the lazy random walk, and they studied the Ricci curvature of the product space of graphs and random graphs. They also considered the Ricci-flat graph and classified undirected Ricci-flat graphs with girth at least five (see \cite{Lin1}). In 2012, Jost and Liu \cite{Jo2} studied the relation between the Ricci curvature and the local clustering efficient. Recently, the Ricci curvature on graphs was applied to cancer network \cite{Tan}, internet topology \cite{Ni} and so on. Sometimes it seems important to consider directed graphs as networks. However, curvatures on directed graphs have not yet been discussed well because it is much more difficult than the undirected case.\\
　In this paper, we define the Ricci curvature of a {\em directed} graph based on Lin-Lu-Yau's definition, and state basic properties (\S2). For some examples, we calculate the Ricci curvature explicitly (\S3). Then giving lower and upper bounds, we obtain conditions for a directed regular graph to be Ricci-flat (Theorem \ref{main0}). Finally, we generalize it to the cartesian product graph (Theorem \ref{main1}).

\section*{acknowledgment} 
The author thanks his supervisors, Professor Reiko Miyaoka and Professor Takashi Shioya, for their continuous support and providing important comments. He also thanks the referee for his/her valuable comments and suggestions.

\section{Definition of Ricci curvature on directed graphs}
　Throughout the paper, we always assume that a graph $G=(V,E)$ is directed. If not, it will be clearly stated. For $x, y \in V$, we write $(x, y)$ as an edge from $x$ to $y$.  We denote the set of vertices of $G$ by $V(G)$ and the set of edges of $G$ by $E(G)$.
\begin{definition}
	\begin{enumerate}
  	\renewcommand{\labelenumi}{(\arabic{enumi})}
	\item A {\em directed path} from $x \in V(G)$ to vertex $y \in V(G)$ is a sequence of edges\\ $\left\{(a_{i}, a_{i+1)} \right\}_{i=0}^{n-1}$, where $a_{0} = x$, $a_{n} = y$. We call $n$ the {\em length} of the path.
   	\item The {\em distance} $d(x, y)$ between two vertices $x, y \in V$ is given by the length of a shortest directed path from $x$ to $y$.
   	\end{enumerate}
\end{definition}

\begin{remark}
If $G$ is {\em strongly connected} (i.e., there exists a directed path from $x$ to $y$ for any $x, y \in V$), then the distance is finite. The distance function satisfies positivity and triangle inequality, but not necessarily the symmetry.
\end{remark}

\begin{definition}
	\begin{enumerate}
 	\renewcommand{\labelenumi}{(\arabic{enumi})}
 	\item For any $x \in V$, the {\em out-neighborhood} of $x$ is defined as
   		\begin{eqnarray*}
   		\Gamma^{\mathrm{out}}(x) := \left\{y \in V \mid (x, y) \in E \right\}.
   		\end{eqnarray*}
  	\item For any $x \in V$, the {\em out-degree} of $x$, denoted by $d_{x}^{\mathrm{out}}$, is the number of edges starting from $x$, i.e., $d_{x}^{\mathrm{out}} = |\Gamma^{\mathrm{out}}(x)|$.
   	\item We call $G$ $d$-{\em regular graph} if every vertex has the same out-degree $d$.
   	\end{enumerate}
\end{definition}

In this paper, we assume that a directed graph $G$ has the following properties.
\begin{enumerate}
\renewcommand{\labelenumi}{(\arabic{enumi})}
\item Locally finiteness (every vertex has a finite degree)
\item Simpleness (there exist no loops and no multi-edges)
\item Strongly connectedness 
\end{enumerate}

\begin{definition}
For any $x \in V(G)$ and any $\alpha \in [0,1]$, we define a probability measure $m_{x}^{\alpha}$ on $V(G)$ by
	\begin{eqnarray*}
    	m_{x}^{\alpha}(v) = 
    		\begin{cases}
    		\alpha , &\mathrm{if}\ v = x, \\
    		\cfrac{1 - \alpha}{d^{\mathrm{out}}_{x}}, &\mathrm{if}\ (x, v) \in E, \\
    		0, & \mathrm{\mathrm{otherwise}}.
    		\end{cases}
   	\end{eqnarray*}
\end{definition} 

\begin{definition}
For two probability measures $\mu$ and $\nu$ on $V(G)$, the 1-Wasserstein distance between $\mu$ and $\nu$ is given by
	\begin{eqnarray*}
	W(\mu, \nu) = \inf_{A} \sum_{u, v \in V}A(u, v)d(u, v),
	\end{eqnarray*}
	where $A : V(G) \times V(G) \to [0, 1]$ runs over all maps satisfying 
	\begin{eqnarray}
	\label{coupling}
		\begin{cases}
		\sum_{v \in V}A(u, v) = \mu(u),\\
		\sum_{u \in V}A(u, v) = \nu(v).
		\end{cases}
	\end{eqnarray}
Such a map $A$ is called a {\em coupling} between $\mu$ and $\nu$. 
\end{definition}

\begin{remark}
To take the infimum of couplings in the definition of $W$, we should check that the set of couplings is not empty. If we take two probability measures $m_{x}^{\alpha}$ and $m_{y}^{\alpha}$ for $x, y \in V(G)$, then there always exists at least one coupling between two probability measures. In fact, we define the coupling $\bar{A}$ between $m_{x}^{\alpha}$ and $m_{y}^{\alpha}$ by
\begin{eqnarray*}
\bar{A}(u, v)=
\begin{cases}
\alpha,& \mathrm{if}\ u=x, v=y,\\
\cfrac{m_{x}^{\alpha}(u)}{d^{\mathrm{out}}_{y}},& \mathrm{if}\ u \in \Gamma^{\mathrm{out}}(x), v \in \Gamma^{\mathrm{out}}(y),\\
0, & \mathrm{\mathrm{otherwise}}.
\end{cases}
\end{eqnarray*}
It is easy to show that $\bar{A}$ satisfies \eqref{coupling}.
\end{remark}

\begin{remark}
A coupling $A$ that attains the Wasserstein distance, does not necessarily exist since the distance is not symmetry. If it exists, we call it the {\em optimal coupling}.
\end{remark}

\begin{definition}
	\label{Ricci}
For any two distinct vertices $x, y \in V$, the {\em $\alpha$-Ricci curvature} of $x$ and $y$ is defined as
   	\begin{eqnarray*}
    	\kappa_{\alpha}(x, y) =  1 - \cfrac{W(m_{x}^{\alpha}, m_{y}^{\alpha})}{d(x, y)}.
   	\end{eqnarray*}
\end{definition}

\begin{remark}[\cite{Yau1}]
	\label{concave}
For any two vertices $x$ and $y$, $\kappa_{\alpha}(x, y)$ is concave in $\alpha \in [0, 1]$. 
\end{remark}

\begin{proposition}
	\label{kantoro}
For any two vertices $x$ and $y$, we have
	\begin{eqnarray}
	\label{kan}
    	W(m_{x}^{\alpha}, m_{y}^{\alpha}) \geq \sup_{f}\left( \sum_{u \in V}f(u)m_{x}^{\alpha}(u) - \sum_{v \in V}f(v)m_{y}^{\alpha}(v) \right),
     	\end{eqnarray}
where $f: V(G) \to \mathbb{R}$ runs over all functions with $f(u) - f(v) \leq d(u, v)$.
\end{proposition}

\begin{proof}
For any coupling $A$ between $m_{x}^{\alpha}$ and $m_{y}^{\alpha}$, we have
	\begin{eqnarray*}
     		\label{duality}
      	\sum_{u, v \in V}A(u, v)d(u, v) & \geq & \sum_{u, v \in V}A(u, v)(f(u) - f(v)) \nonumber \\
      	& = & \sum_{u \in V}f(u)\sum_{v \in V}A(u, v) - \sum_{v \in V}f(v)\sum_{u \in V}A(u, v) \nonumber \\
      	& = & \sum_{u \in V}f(u)m_{x}^{\alpha}(u) - \sum_{v \in V}f(v)m_{y}^{\alpha}(v). 
     	\end{eqnarray*}
Since the left-hand side is independent of $f$, and so is the right-hand side of $A$, the proof is completed.
\end{proof}

If there exists a function satisfied the equality of \eqref{kan}, then we call it the {\em optimal function}.

\begin{remark}
Proposition \ref{kantoro} holds for $f$ running over all the 1-Lipshitz functions. In the case of undirected graphs, the equality holds in Proposition \ref{kantoro}, and we call the proposition the Kantorovich-Rubinstein duality \cite{Vi2}. 
\end{remark}

We would like to obtain the upper bound of $\kappa_{\alpha}/(1-\alpha)$. In \cite{Yau1}, this is obtained by using the symmetry of the 1-Wasserstein distance. However, in the case of directed graphs, the distance is not symmetry in general, so we use another approach. For any two distinct vertices $x, y$, we decompose $\Gamma^{\mathrm{out}}(y)$ into the following sets according to their distance from $x$ :
	\begin{eqnarray*}
     	\Gamma_{x}^{k}(y) & = & \left\{v \in \Gamma^{\mathrm{out}}(y) \mid d(x, v) = d(x, y) - k \right\},
    	\end{eqnarray*}
	where $-1 \leq k \leq d(x, y)$, since $0 \leq d(x, v) = d(x,y) - k \leq d(x, y) + d(y, v) =d(x,y)+1$. 
	
\begin{proposition}
	\label{Yamada1}
For any two distinct vertices $x, y$, we have
   	\begin{eqnarray}
   	\displaystyle \kappa_{\alpha}(x, y) \leq \frac{1 - \alpha}{d(x, y)}\left(1 + \frac{1}{d^{\mathrm{out}}_{y}} \sum_{k = -1}^{d(x, y)} k |\Gamma_{x}^{k}(y)| \right).
   	\end{eqnarray}
\end{proposition}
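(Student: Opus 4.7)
The plan is to apply Proposition \ref{kantoro} (the Kantorovich-Rubinstein inequality) with a carefully chosen $1$-Lipschitz test function. Since we want an upper bound on $\kappa_\alpha(x,y)$, and $\kappa_\alpha(x,y) = 1 - W(m_x^\alpha, m_y^\alpha)/d(x,y)$, the goal is to produce a \emph{lower} bound on $W(m_x^\alpha, m_y^\alpha)$. The natural candidate is $f(z) := -d(x, z)$. I would first verify that $f \in F(G)$: for any $u, v \in V$ the triangle inequality $d(x,v) \leq d(x,u) + d(u,v)$ rearranges to $f(u) - f(v) = d(x,v) - d(x,u) \leq d(u,v)$, so $f$ is $1$-Lipschitz in the directed sense required by $F(G)$. (Note that $+d(x,z)$ need not be $1$-Lipschitz here, which is why the directed setting forces this particular sign.)

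Next I would plug this $f$ into the duality inequality and expand both mass distributions on their supports. On $\mathrm{supp}(m_x^\alpha) = \{x\} \cup \Gamma^{\mathrm{out}}(x)$, the function $f$ takes value $0$ at $x$ and $-1$ on each out-neighbor, so $\sum_z f(z) m_x^\alpha(z)$ collapses immediately to $-(1-\alpha) d_x^{\mathrm{out}} / d_x$. On $\mathrm{supp}(m_y^\alpha) = \{y\} \cup \Gamma^{\mathrm{out}}(y)$, I would split the out-neighbors of $y$ using the disjoint decomposition $\Gamma^{\mathrm{out}}(y) = \bigsqcup_{k} \Gamma_x^k(y) \sqcup \Gamma_x^+(y)$, where by definition $d(x, v) = d(x,y) - k$ on $\Gamma_x^k(y)$ and $d(x, v) = d(x,y) + 1$ on $\Gamma_x^+(y)$. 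This converts $\sum_{v \in \Gamma^{\mathrm{out}}(y)} d(x,v)$ into $d(x,y)\, d_y^{\mathrm{out}} - \sum_k k|\Gamma_x^k(y)| + |\Gamma_x^+(y)|$, and the contribution at $z = y$ itself produces the term $d(x,y) \bigl(\alpha + (1-\alpha)d_y^{\mathrm{in}}/d_y\bigr)$.

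Collecting everything and using the identity $d_y^{\mathrm{in}} + d_y^{\mathrm{out}} = d_y$ (implicit in the assumption that $m_y^\alpha$ is a probability measure, i.e.\ no pairs of anti-parallel edges at $y$), the pieces involving $d(x,y)$ combine cleanly into a single $d(x,y)$, leaving
$$\sum_z f(z)\bigl(m_x^\alpha(z) - m_y^\alpha(z)\bigr) = d(x,y) - (1-\alpha)\left\{\frac{1}{d_y}\left(\sum_{k\geq 1} k|\Gamma_x^k(y)| - |\Gamma_x^+(y)|\right) + \frac{d_x^{\mathrm{out}}}{d_x}\right\}.$$
Inserting this lower bound for $W(m_x^\alpha, m_y^\alpha)$ into the definition of $\kappa_\alpha$ and dividing by $d(x,y)$ yields the desired inequality.

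The main obstacle is essentially bookkeeping rather than conceptual: one must correctly book-keep the decomposition of $\Gamma^{\mathrm{out}}(y)$ by the level sets of $d(x, \cdot)$, be careful that $f(z) = -d(x,z)$ (and not $+d(x,z)$) is the right Lipschitz function in the directed setting, and use the non-trivial fact $d_y = d_y^{\mathrm{in}} + d_y^{\mathrm{out}}$ that underpins the choice of probability measure in order to see that the $d(x,y)\alpha$ and $d(x,y)(1-\alpha)d_y^{\mathrm{in}}/d_y$ terms combine with the $d(x,y)d_y^{\mathrm{out}}/d_y$ term into a clean $d(x,y)$.
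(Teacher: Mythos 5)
Your proposal is correct and follows essentially the same route as the paper's own proof: the test function $f(z)=-d(x,z)$ fed into Proposition \ref{kantoro}, the level-set decomposition of $\Gamma^{\mathrm{out}}(y)$ into the sets $\Gamma_x^k(y)$ and $\Gamma_x^{+}(y)$, and the identity $d_y^{\mathrm{in}}+d_y^{\mathrm{out}}=d_y$ to collapse the $d(x,y)$ terms. Your explicit remarks on why $-d(x,z)$ (not $+d(x,z)$) is the admissible Lipschitz function in the directed setting and on the implicit no-anti-parallel-edges assumption behind $d_y=d_y^{\mathrm{in}}+d_y^{\mathrm{out}}$ are accurate refinements of what the paper leaves tacit.
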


\begin{proof}
For a fixed $x \in V$, define $f(z) := - d(x, z)$ for $z \in V$. Then it follows that
	\begin{eqnarray*}
     	f(z) - f(w) & = & - d(x, z) + d(x, w) \\
     	& \leq & d(z, w).
     	\end{eqnarray*}
By Proposition \ref{kantoro}, we have
      	\begin{eqnarray*}
      	W(m_{x}^{\alpha}, m_{y}^{\alpha}) & \geq & \sum_{z \in V}d(x, z)(m_{y}^{\alpha}(z) - m_{x}^{\alpha}(z)) \\
      	& = & \alpha d(x, y) + \sum_{k = -1}^{d(x, y)}(d(x, y) -  k)|\Gamma_{x}^{k}(y)|\frac{1 - \alpha}{d^{\mathrm{out}}_{y}} - (1 - \alpha)\\
      	& = &  \alpha d(x, y) +  d(x, y) (1 - \alpha)  - \cfrac{1 - \alpha}{d^{\mathrm{out}}_{y}} \sum_{k = -1}^{d(x, y)}k |\Gamma_{x}^{k}(y)| - (1 - \alpha) \\
      	& = & d(x, y) - \cfrac{1 - \alpha}{d^{\mathrm{out}}_{y}} \sum_{k = -1}^{d(x, y)}k |\Gamma_{x}^{k}(y)|  - (1 - \alpha).
     	\end{eqnarray*}
The proof is completed.
\end{proof}

\begin{corollary}
	\label{cor0}
If any edge $(x, y)$ satisfies $(y, x) \notin E$ and
\begin{eqnarray*}
  	 \Gamma^{\mathrm{out}}(x) \cap \Gamma^{\mathrm{out}}(y) = \emptyset,
\end{eqnarray*}
then we have
\begin{eqnarray*}
\kappa_{\alpha}(x, y) \leq 0.
\end{eqnarray*}
\end{corollary}

\begin{proof}
If we take any edge $(x, y)$, then we have
	\begin{eqnarray*}
  	\kappa_{\alpha}(x, y) \leq 1 + \cfrac{|\Gamma^{1}_{x}(y)| - |\Gamma^{-1}_{x}(y)|}{d_{y}^{\mathrm{out}}},
 	\end{eqnarray*}
by Proposition \ref{Yamada1}. The first assumption implies $|\Gamma^{1}_{x}(y)| =0$, and the second implies $|\Gamma^{0}_{x}(y)| =0$. So, the out-degree of $y$ is equal to $|\Gamma^{-1}_{x}(y)|$. Thus, we obtain
 	\begin{eqnarray*}
 	\label{upper}
 	\kappa_{\alpha}(x, y) \leq 0.
 	\end{eqnarray*}
\end{proof}

\begin{remark}
Remark \ref{concave} implies that $h(\alpha) := \kappa_{\alpha}(x, y)/(1 - \alpha)$ is a monotone increasing function in $\alpha \in [0, 1)$ (the detail is written in the proof of Lemma 2.1 in \cite{Yau1}). Proposition \ref{Yamada1} implies that $h(\alpha)$ is bounded. Thus, the limit $\kappa(x, y) = \lim_{\alpha \to 1} \kappa_{\alpha}(x, y) / (1 - \alpha)$ exists.
\end{remark}

\begin{definition}
For any two distinct vertices $x, y \in V$, the {\em Ricci curvature} of $x$ and $y$ is defined as
	\begin{eqnarray*}
    	\kappa(x, y) = \lim_{\alpha \to 1}\cfrac{\kappa_{\alpha}(x, y)}{1 - \alpha}.
   	\end{eqnarray*}
   	\end{definition}
	
   	Whenever we are interested in the lower bound of Ricci curvature, the following lemma implies that it is sufficient to consider the Ricci curvature of the edge, although the Ricci curvature is defined for any pair of vertices.
\begin{proposition}
If $\kappa(u, v) \geq \kappa_{0}$ for any edge $(u, v) \in E(G)$, then $\kappa(x, y) \geq \kappa_{0}$ for any pair of vertices $(x, y)$.
\end{proposition}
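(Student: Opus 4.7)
The plan is to induct on the distance $n = d(x, y)$. For $n = 1$, the pair $(x, y)$ is itself an edge, so the conclusion is precisely the hypothesis. For $n \geq 2$, I would pick an intermediate vertex $z$ on a shortest directed path from $x$ to $y$ with $d(x, z) = 1$ and $d(z, y) = n - 1$, so that $(x, z) \in E(G)$ and there is still a directed path from $z$ to $y$.

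The main tool is a triangle inequality for the Wasserstein distance,
\[
W(m_{x}^{\alpha}, m_{y}^{\alpha}) \leq W(m_{x}^{\alpha}, m_{z}^{\alpha}) + W(m_{z}^{\alpha}, m_{y}^{\alpha}).
\]
I would establish this by the standard gluing construction: given optimal couplings $A_{1}$ between $m_{x}^{\alpha}$ and $m_{z}^{\alpha}$ and $A_{2}$ between $m_{z}^{\alpha}$ and $m_{y}^{\alpha}$, define
\[
A(u, v) = \sum_{w : m_{z}^{\alpha}(w) > 0} \frac{A_{1}(u, w) A_{2}(w, v)}{m_{z}^{\alpha}(w)}.
\]
A direct check using the marginal conditions shows $A$ is a coupling between $m_{x}^{\alpha}$ and $m_{y}^{\alpha}$, and applying the triangle inequality $d(u, v) \leq d(u, w) + d(w, v)$ to the cost of $A$ gives the desired Wasserstein bound.

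Feeding this into Definition~\ref{Ricci} together with $d(x, y) = d(x, z) + d(z, y)$ yields
\[
\kappa_{\alpha}(x, y) d(x, y) \geq \kappa_{\alpha}(x, z) d(x, z) + \kappa_{\alpha}(z, y) d(z, y).
\]
Dividing by $1 - \alpha$ and letting $\alpha \to 1$ gives the same inequality for $\kappa$. The base case applied to the edge $(x, z)$ and the inductive hypothesis applied to $(z, y)$ (legal since $d(z, y) < n$ and there is a directed path from $z$ to $y$) both supply a lower bound of $\kappa_{0}$, so the right-hand side is at least $\kappa_{0} (d(x, z) + d(z, y)) = \kappa_{0} d(x, y)$, and the induction closes.

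The main subtlety I expect is checking that the gluing step survives in the directed setting, since $d$ is not symmetric. Fortunately the gluing cost estimate uses only the one-sided triangle inequality $d(u, v) \leq d(u, w) + d(w, v)$, which holds on directed graphs as recorded in the earlier remark; consequently everything else is routine.
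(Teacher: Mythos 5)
Your proof is correct and takes essentially the same route as the paper: decompose along a shortest directed path, apply the triangle inequality for $W$ (which the paper invokes implicitly, telescoping over the whole path at once rather than inducting one edge at a time), and pass to the limit $\alpha \to 1$. The only real difference is cosmetic --- induction versus a single sum over the path --- plus your explicit gluing construction for the Wasserstein triangle inequality, a step the paper leaves unproved.
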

The proof is similar to the case of undirected graphs \cite{Yau1} and is omitted. If $\kappa(x, y) = r \in \mathbb{R}$ holds for all edges $(x, y) \in E$, then we say that $G$ is a {\em graph of constant Ricci curvature}, and write $\kappa(G) = r$. If $\kappa(G) = 0$, we say $G$ is {\em Ricci-flat}.

\begin{remark}
On a Ricci-flat graph, $\kappa(x, y) = 0$ does not necessarily hold for any vertices $x$, $y$ unless $(x, y) \in E$. Example \ref{cycle} below is one of such examples.
\end{remark}


\section{Examples}
In this section, we calculate the Ricci curvature on some directed graphs.

\begin{definition}
For a finite graph $G$, let $M=(m_{ij})$ be the matrix defined by the following :
	\begin{eqnarray*}
   	m_{ij}=
   		\begin{cases}
    		1, &\mathrm{if}\ (v_{i}, v_{j}) \in E,\\
    		0, &\mathrm{if}\ (v_{i}, v_{j}) \not \in E,\\
   	\end{cases}
\end{eqnarray*}
where $V(G)=\left\{v_{1}, v_{2}, \cdots, v_{n} \right\}$. We call $M$ the \emph{adjacency matrix} of the graph $G$. Note that the adjacency matrix is not necessarily symmetric.
\end{definition}

\begin{example}[Complete graph $K_{2n+1}$]
\label{complete}
We consider a directed complete graph with the following adjacency matrix $M_{2n+1}$ :
	\begin{eqnarray*}
    		\begin{cases}
     		m_{1,j}=1, & j \in \left\{2, \cdots, n+1 \right\},\\
     		m_{1,j}=0, & j \in \left\{ 1 \right\} \cup \left\{n+2, \cdots, 2n+1 \right\} ,\\
     		m_{i, j}=1, & i \in \left\{2, \cdots, n+1 \right\}, j \in \left\{1+i, \cdots, n+i \right\},\\
     		m_{i, j}=0, & i \in \left\{2, \cdots, n+1 \right\}, j \not \in \left\{1+i, \cdots, n+i \right\},\\
     		m_{i, j}=1, & i \in \left\{n+2, \cdots, 2n \right\}, j \in \left\{1+i, \cdots, 2n+1 \right\} \cup \left\{1, \cdots, i-n-1 \right\},\\
     		m_{i, j}=0, & i \in \left\{n+2, \cdots, 2n \right\}, j \not \in \left\{1+i, \cdots, 2n+1 \right\} \cup \left\{1, \cdots, i-n-1 \right\},\\
     		m_{2n+1, j}=1, & j \in \left\{1, \cdots, n \right\}, \\
     		m_{2n+1, j}=0, & j \in \left\{n+1, \cdots, 2n+1 \right\}.
    		\end{cases}
   	\end{eqnarray*}
For instance, $M_{3}$, $M_{5}$, $M_{7}$ are given by
      	\begin{eqnarray*}
    	M_{3}=\left( \begin{array}{ccc}
	0 & 1 & 0  \\
	0 & 0 & 1  \\
	1 & 0 & 0  \\
	\end{array} \right),
    	M_{5}=\left( \begin{array}{ccccc}
	0 & 1 & 1 & 0 & 0 \\
	0 & 0 & 1 & 1 & 0 \\
	0 & 0 & 0 & 1 & 1 \\
	1 & 0 & 0 & 0 & 1 \\
	1 & 1 & 0 & 0 & 0 \\
	\end{array} \right),
    	M_{7}=\left( \begin{array}{ccccccc}
	0 & 1 & 1 & 1 & 0 & 0 & 0 \\
	0 & 0 & 1 & 1 & 1 & 0 & 0 \\
	0 & 0 & 0 & 1 & 1 & 1 & 0 \\
	0 & 0 & 0 & 0 & 1 & 1 & 1 \\
	1 & 0 & 0 & 0 & 0 & 1 & 1 \\
	1 & 1 & 0 & 0 & 0 & 0 & 1 \\
	1 & 1 & 1 & 0 & 0 & 0 & 0 \\
	\end{array} \right).
   	\end{eqnarray*}
	By the definition of $M_{2n+1}$, for $j \in \left\{2, \cdots, n \right\}$, we have
	\begin{eqnarray*}
	\Gamma^{\mathrm{out}}(v_{1}) \cap \Gamma^{\mathrm{out}}(v_{j}) = \left\{v_{j+1}, \cdots, v_{n+1} \right\}
	\end{eqnarray*}
	and $\Gamma^{\mathrm{out}}(v_{1}) \cap \Gamma^{\mathrm{out}}(v_{n+1}) = \emptyset$. For simplicity, we take the vertex $v_{1}$, and calculate the Ricci curvature on the edges from $v_{1}$. For $j \in \left\{2, \cdots, n+1 \right\}$ and $k \in \left\{1, \cdots, j-1 \right\}$, we define a coupling $A_{j}$ between $m^{\alpha}_{v_{1}}$ and $m^{\alpha}_{v_{j}}$ by
	\begin{eqnarray*}
	\label{coupling-complete}
	A_{j}(u,v)=
	\begin{cases}
	\alpha, & \mathrm{if}\ u=v_{1}, v=v_{j},\\
	\cfrac{1-\alpha}{n}, & \mathrm{if}\ u=v \in \Gamma^{\mathrm{out}}(v_{1}) \cap \Gamma^{\mathrm{out}}(v_{j}),\\
	\cfrac{1-\alpha}{n}, & \mathrm{if}\ u =v_{1+k}, v=v_{n+1+k},\\
	0, & \mathrm{otherwise},
	\end{cases}
	\end{eqnarray*}
	and define a function $f_{j} : \Gamma^{\mathrm{out}}(v_{1}) \to \Gamma^{\mathrm{out}}(v_{j})$ by
	\begin{eqnarray*}
	\label{function-complete}
	f_{j}(u)=
	\begin{cases}
	1, & \mathrm{if}\ u = v_{1},\\
	-1, & \mathrm{if}\ u \in \Gamma^{\mathrm{out}}(v_{j}) \setminus \Gamma^{\mathrm{out}}(v_{1}),\\
	0, & \mathrm{otherwise}.
	\end{cases}
	\end{eqnarray*}
	By using these coupling and function, for any edge $(x, y) \in E(K_{2n+1})$, the value is either one of the following.
	\begin{eqnarray*}
	\kappa(x, y) \in
	\left\{ 0,\ \cfrac{1}{n},\ \cdots ,\ \cfrac{n - 1}{n} \right\}.
	\end{eqnarray*}
\end{example}

\begin{example}[Cycle $C_{n}$]
	\label{cycle}
	We consider a directed cycle as follows.\\
Let $V(C_{n}) = \left\{x_{1}, x_{2}, \cdots, x_{n} \right\}$. For any $i \in \left\{1, 2, \cdots, n-1 \right\}$, let $(x_{i}, x_{i + 1}) \in E(C_{n})$ and $(x_{n}, x_{1}) \in E(C_{n})$. This cycle is called a {\em directed cycle}.
Then this is Ricci-flat, namely, 
   	\begin{eqnarray*}
    	\kappa(C_{n}) = 0.
   	\end{eqnarray*}
	\noindent
	However, in the middle of Figure \ref{Tree}, $\kappa (x_{1}, x_{5})=5/4$, and is not zero. 
\end{example}

\begin{example}[Tree $T$]
In general, a tree has no strongly connected direction. However, if we consider the directed tree with  $d_{v}^{\mathrm{out}} = 1$ for any $v \in V(T)$, we can calculate the Ricci curvature on any edges. The Ricci curvature is given by
	\begin{eqnarray*}
    	\kappa(T) = 0.
    	\end{eqnarray*} 
\end{example}
     
  \begin{figure}[h]
     \begin{center} 
     \includegraphics[scale=0.45]{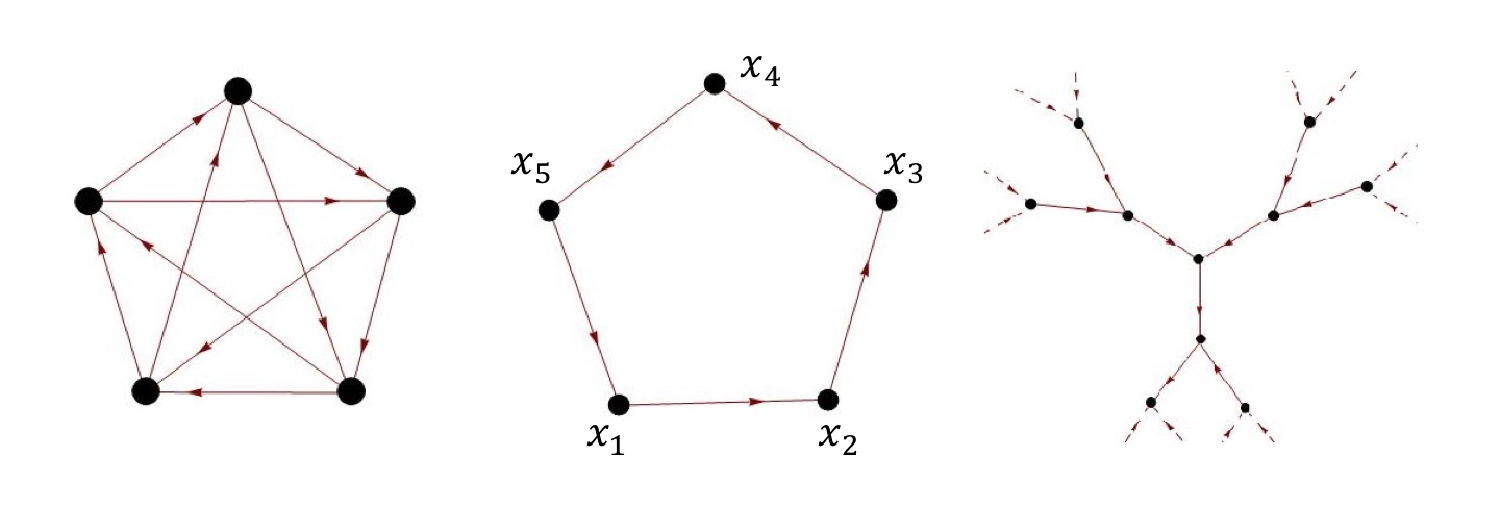}
     \end{center}
     \caption{Complete graph $K_{5}$, directed cycle $C_{5}$, and regular tree $T_{3}$}
      \label{Tree} 
    \end{figure}
    
\begin{example}[Ladder graph]
We consider an infinite graph $G$, called a {\em Ladder graph}, that is directed as follows.\\
Let $V(G)=\left\{x_{1}, x_{2}, \cdots, x_{n}, x_{n+1}, \cdots \right\}$ and
 	\begin{eqnarray*}
  	E(G) = \left\{(x_{1}, x_{2}), (x_{4}, x_{3}), (x_{6}, x_{5}), \cdots \right\} \cup \left\{ (x_{3}, x_{1}), (x_{5}, x_{3}), \cdots \right\} \cup \left\{(x_{2}, x_{4}), (x_{4}, x_{6}), \cdots \right\}.
  	\end{eqnarray*}
Then, the Ricci curvature is given by
  	\begin{eqnarray*}
   	\kappa(u, v) \leq 0\ \mathrm{for\ any}\ (u, v) \in E(G).
  	\end{eqnarray*}
\end{example} 

   \begin{figure}[h]
     \begin{center} 
     \includegraphics[scale=0.5]{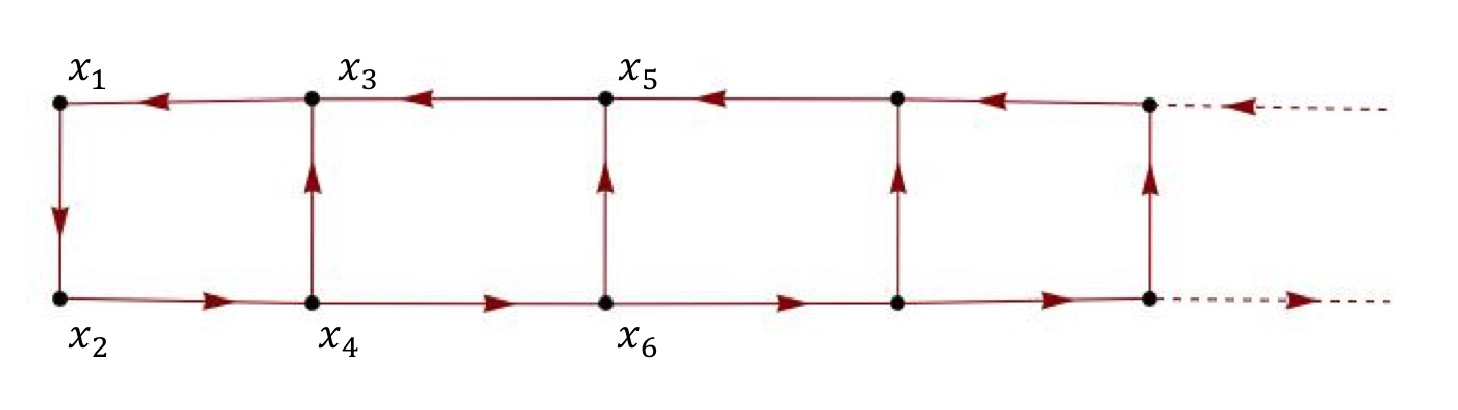}
     \end{center}
     \caption{Ladder graph}
      \label{infinite} 
    \end{figure}
    

\section{Properties of Ricci curvature on a directed graph}
In this section, we prove some properties of the Ricci curvature. 

	\subsection{Conditions to be Ricci-flat graph.}
	
\begin{proposition}
	\label{Yamada2}
For any edge $(x, y) \in E(G)$, we have
	\begin{eqnarray*}
	\kappa(x, y) \geq (1 - D)\left(1 - \cfrac{1}{d^{\mathrm{out}}_{x}} \right),
	\end{eqnarray*}
where $D := \max_{u \in \Gamma^{\mathrm{out}}(x), v \in \Gamma^{\mathrm{out}}(y)} d(u, v)$.
\end{proposition}

\begin{proof}
We take any edge $(x, y)$, and calculate Ricci curvature of $x$ and $y$ by the coupling between $m^{\alpha}_{x}$ and $m^{\alpha}_{y}$. Our transfer plan moving $m^{\alpha}_{x}$ to $m^{\alpha}_{y}$ should be as follows :
	\begin{enumerate}
	\item Move the mass of $\alpha$ from $x$ to $y$. The distance is $1$.
	\item Move the mass of $\cfrac{1 - \alpha}{d^{\mathrm{out}}_{x}}$ from $y$ to $\Gamma^{\mathrm{out}}(y)$. The distance is $1$.
	\item Fill gaps using the mass at $\Gamma^{\mathrm{out}}(x) \setminus \left\{y \right\}$.\\
The distance is at most $D:=\max_{u \in \Gamma^{\mathrm{out}}(x), v \in \Gamma^{\mathrm{out}}(y)} d(u, v)$.
	\end{enumerate}
By this transfer plan, calculating the 1-Wasserstein distance between $m_{x}^{\alpha}$ and $m_{y}^{\alpha}$, we have
	\begin{eqnarray*}
 	W(m_{x}^{\alpha}, m_{y}^{\alpha}) &\leq& \alpha + \cfrac{1 - \alpha}{d^{\mathrm{out}}_{x}} + \cfrac{D(1 - \alpha)}{d^{\mathrm{out}}_{x}}(d^{\mathrm{out}}_{x}-1)\\
 	&=& \alpha + D(1- \alpha) +\cfrac{(1 - D)(1 - \alpha)}{d^{\mathrm{out}}_{x}}.
	\end{eqnarray*}
Then we obtain
	\begin{eqnarray*}
	\kappa_{\alpha}(x, y) \geq (1 - \alpha) ( 1 - D) - \cfrac{(1 - D)(1 - \alpha)}{d^{\mathrm{out}}_{x}},
	\end{eqnarray*}
which implies
	\begin{eqnarray*}
	\kappa(x, y) \geq (1 - D)\left( 1 - \cfrac{1}{d^{\mathrm{out}}_{x}} \right).
	\end{eqnarray*}
\end{proof}

By using Proposition \ref{Yamada1} and Proposition \ref{Yamada2}, we obtain the following :

\begin{corollary}
For an edge $(x, y) \in E$, we assume that $d^{\mathrm{out}}_{x}=1$ and $(y, x) \notin E$. Then we have
	\begin{eqnarray*}
	\kappa(x, y) = 0.
	\end{eqnarray*}
\end{corollary}

\begin{proposition}
\label{cor1}
If there exists a bijective map $\phi$ : $\Gamma^{out}(x) \to \Gamma^{out}(y)$ with $d(u, \phi(u)) = 1$ for any edge $(x, y)$ and $u \in \Gamma^{\mathrm{out}}(x)$, then we have
\begin{eqnarray*}
\kappa(x, y) \geq 0.
\end{eqnarray*}
\end{proposition}

\begin{proof}
We take any edge $(x, y)$, and assume that $|\Gamma^{out}(x)|=d$. By the assumption, $G$ is a $d$-regular graph. We define a coupling $A_{0}$ between $m^{\alpha}_{x}$ and $m^{\alpha}_{y}$ by
 	\begin{eqnarray*}
 	A_{0}(u, v)=
 		\begin{cases}
 		\alpha, & \mathrm{if}\ u=x, v=y,\\
		\cfrac{1 - \alpha}{d}, & \mathrm{if}\ u \in \Gamma^{\mathrm{out}}(x), v = \phi(u),\\
		0, & \mathrm{otherwise}.
 		\end{cases}
 	\end{eqnarray*}
By using this coupling, calculating the 1-Wasserstein distance between $m_{x}^{\alpha}$ and $m_{y}^{\alpha}$, we have
	\begin{eqnarray*}	
	W(m^{\alpha}_{x}, m^{\alpha}_{y}) &\leq& \alpha + \sum_{i=1}^{d}\cfrac{1 - \alpha}{d}\\
	&=& \alpha + (1 - \alpha) = 1.
	\end{eqnarray*}
Thus we obtain
	\begin{eqnarray*}
	\kappa(x, y) \geq 0.
	\end{eqnarray*}
\end{proof}

Combining Corollary \ref{cor0} and Proposition \ref{cor1}, we obtain the following :

\begin{theorem}
	\label{main0}
Assume that any edge $(x, y)$ satisfies the following conditions : 
 	\begin{enumerate}
  	\item $(y, x) \notin E$, and $\Gamma^{\mathrm{out}}(x) \cap \Gamma^{\mathrm{out}}(y) = \emptyset$.
  	\item There exists a bijective map $\phi$ : $\Gamma^{out}(x) \to \Gamma^{out}(y)$ with $d(u, \phi(u)) = 1$ for any $u \in \Gamma^{\mathrm{out}}(x)$.
 	\end{enumerate}
Then $G$ is Ricci-flat.
\end{theorem}

\begin{remark}
\label{some}
We cannot replace ``any" by ``some" in the condition (1) of Theorem \ref{main0}. In fact, under the condition (2), we have examples with $(y,x)\in E$ for some edge, but not all edges (Figure \ref{counter}(a)), and also, $\Gamma^{\mathrm{out}}(x) \cap \Gamma^{\mathrm{out}}(y) = \emptyset$ for some $(x, y)$, but not all $(x, y)$ (Figure \ref{counter} (b)). In fact, the Ricci curvature of $(x_{0}, z)$ in Figure \ref{counter} (b) is $1/2$, not zero. On the other hand, the graph in Figure \ref{counter} (a) is Ricci-flat, so there exists a Ricci-flat graph that does not satisfy the conditions of Theorem \ref{main0}.
\end{remark}

\begin{remark}
\label{need}
Both conditions of Theorem \ref{main0} are needed. In fact, the graph in Figure \ref{counter} (c) satisfies (1),but the distance from $z \in \Gamma^{\mathrm{out}}(x_{0})$ to any vertex in $\Gamma^{\mathrm{out}}(y_{0})$ is 2. So there do not exist bijective maps satisfying (2). On the other hand, the complete graph $K_{2n+1}$ (Example \ref{complete}) satisfies only the condition (2).
\end{remark}

   \begin{figure}[h]
     \begin{center} 
     \includegraphics[scale=0.45]{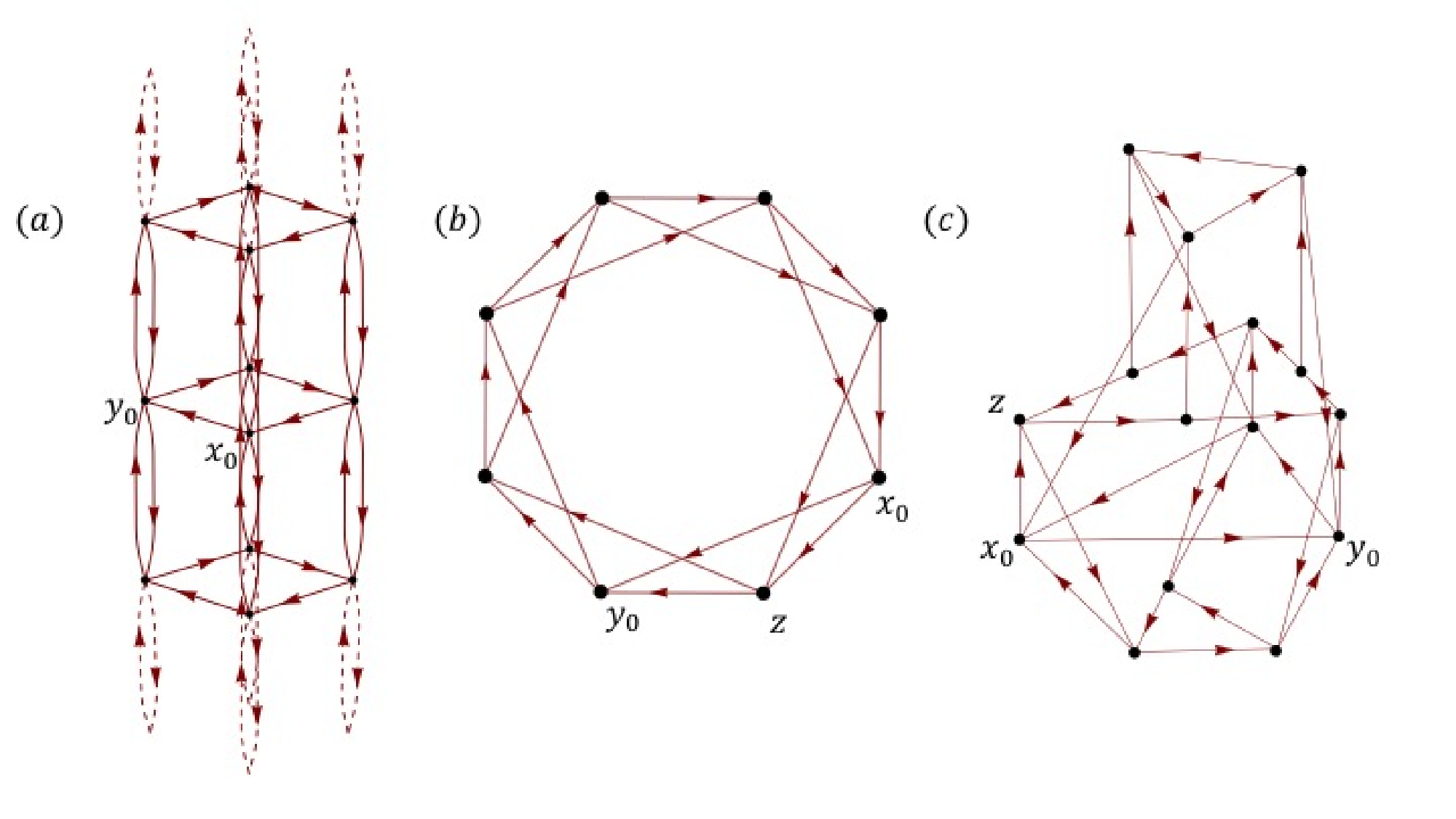}
     \end{center}
     \caption{Example of Remark \ref{some}} and Remark \ref{need}
      \label{counter} 
    \end{figure}

\subsection{Cartesian product graph}

\begin{definition}
For two directed graphs $G = (V(G), E(G))$ and $H = (V(H), E(H))$, the {\em cartesian product graph} of $G$ and $H$, denoted by $G \times H$, is a directed graph over the vertex set $V(G \times H)=V(G) \times V(H)$, and $(x_{1}, y_{1})$, $(x_{2}, y_{2}) \in V(G \times H)$ are
connected if 
 	\begin{eqnarray*}
 	x_{1} = x_{2} \ \mathrm{and}\ (y_{1}, y_{2}) \in E(H),
 	\end{eqnarray*}
or
 	\begin{eqnarray*}
 	(x_{1}, x_{2}) \in E(G) \ \mathrm{and}\ y_{1} = y_{2}.
 	\end{eqnarray*}
\end{definition}

\begin{remark}
If $G$ is a $d_{G}$-regular graph and $H$ is a $d_{H}$-regular graph, then $G \times H$ is a $(d_{G} + d_{H})$-regular graph.
\end{remark}

For any graph $G$, the Ricci curvature on $G$ is denoted by $\kappa^{G}$. When we calculate the Ricci curvature on the cartesian product graph, we need to consider an optimal coupling and an optimal function. The details of the calculation are written in Theorem 3.1 in \cite{Yau1}.

\begin{theorem}
	\label{main1}
Assume that $G$ satisfies the conditions of Theorem \ref{main0}. Then for any $d_{H}$-regular graph $H$, we have
 	\begin{eqnarray*}
 	\kappa^{G \times H}((x_{1}, y),( x_{2}, y)) = 0,
 	\end{eqnarray*}
for $(x_{1}, x_{2}) \in E(G)$ and $y \in V(H)$.
\end{theorem}

\begin{proof}
We take any edge $((x_{1}, y),( x_{2}, y)) \in E(G \times H)$, and give the coupling between $m_{(x_{1}, y)}^{\alpha}$ and $m_{(x_{2}, y)}^{\alpha}$. Since $G$ satisfies the conditions of Theorem \ref{main0}, by Proposition \ref{cor0}, there exists the optimal function $f(z)=-d(x_{1}, z)$ for $z \in V(G)$, and by Proposition \ref{cor1}, $G$ is a $d_{G}$-regular graph and the optimal coupling $A_{0}$ is given in the proof of Proposition \ref{cor1}, i.e.,
	\begin{eqnarray*}
	1 = \sum_{z \in V}d(x_{1}, z)(m_{x_{2}}^{\alpha}(z) - m_{x_{1}}^{\alpha}(z)) 
	\leq W(m_{x_{1}}^{\alpha}, m_{x_{2}}^{\alpha}) 
	\leq \sum_{u, v \in V}A_{0}(u, v)d(u, v) = 1.
	\end{eqnarray*}
By using this coupling $A_{0}$, a map $B: V(G \times H) \times V(G \times H) \to [0,1]$ is defined by
 	\begin{eqnarray*}
 	&B((u_{1}, v_{1}), (u_{2}, v_{2}))&\\
 	&=& 
 		\begin{cases}
		\cfrac{d_{G} A_{0}(x_{1}, x_{2})}{d_{G}+d_{H}} + \cfrac{\alpha d_{H}}{d_{G}+ d_{H}}, & \mathrm{if}\ u_{1}= x_{1}, u_{2}= x_{2}, v_{1}=v_{2}=y,\\
		\cfrac{d_{G} A_{0}(u_{1}, u_{2})}{d_{G}+ d_{H}}, & \mathrm{if}\ u_{1} \in \Gamma^{\mathrm{out}}(x_{1}), u_{2} = \phi(u_{1}), v_{1}=v_{2}=y,\\
		\cfrac{d_{H} m_{y}^{\alpha}(v)}{d_{G}+d_{H}}, & \mathrm{if}\ u_{1}=x_{1}, u_{2}=x_{2}, v_{1}=v_{2}=v \in \Gamma^{\mathrm{out}}(y),\\
		0, & \mathrm{otherwise},
		\end{cases}
 	\end{eqnarray*}
that is, 
 	\begin{eqnarray*}
 	B((u_{1}, v_{1}), (u_{2}, v_{2}))= 
 		\begin{cases}
		\alpha, & \mathrm{if}\ u_{1}= x_{1}, u_{2}= x_{2}, v_{1}=v_{2}=y,\\
		\cfrac{1- \alpha}{d_{G}+ d_{H}}, & \mathrm{if}\ u_{1} \in \Gamma^{\mathrm{out}}(x_{1}), u_{2} = \phi(u_{1}), v_{1}=v_{2}=y,\\
		\cfrac{1 - \alpha}{d_{G}+d_{H}}, & \mathrm{if}\ u_{1}=x_{1}, u_{2}=x_{2}, v_{1}=v_{2}=v \in \Gamma^{\mathrm{out}}(y),\\
		0, & \mathrm{otherwise}.
		\end{cases}
 	\end{eqnarray*}
It is easy to check that this map is a coupling between $m_{(x_{1}, y)}^{\alpha}$ and $m_{(x_{2}, y)}^{\alpha}$. So, the 1-Wasserstein distance satisfies
 	\begin{eqnarray*}
 	W(m^{\alpha}_{(x_{1}, y)}, m^{\alpha}_{(x_{2}, y)}) \leq \cfrac{d_{G}}{d_{G}+ d_{H}}W(m^{\alpha}_{x_{1}}, m^{\alpha}_{x_{2}}) + \cfrac{d_{H}}{d_{G}+ d_{H}}.
 	\end{eqnarray*}
Then we have
 	\begin{eqnarray*}
 	\kappa^{G\times H}_{\alpha}((x_{1}, y), (x_{2}, y)) \geq \cfrac{d_{G}}{d_{G}+ d_{H}}\kappa^{G}_{\alpha}(x_{1}, x_{2}).
 	\end{eqnarray*}
Thus, we obtain
 	\begin{eqnarray}
 	\label{product1}
 	\kappa^{G\times H}((x_{1}, y), (x_{2}, y)) \geq \cfrac{d_{G}}{d_{G}+ d_{H}}\kappa^{G}(x_{1}, x_{2}).
 	\end{eqnarray}
　On the other hand, by using the optimal function, we define a function $F: V(G\times H) \times V(G \times H) \to \mathbb{R}$ by
	\begin{eqnarray*}
	F(u, v)=
		\begin{cases}
		f(u), & \mathrm{if}\ v= y,\\
		\cfrac{f(x_{1})+f(x_{2})+1}{2}, & \mathrm{if}\ u=x_{1}, v \neq y,\\
		\cfrac{f(x_{1})+f(x_{2}) -1}{2}, & \mathrm{if}\ u=x_{2}, v \neq y,\\
		0, & \mathrm{otherwise},
		\end{cases}
	\end{eqnarray*}
that is, 
 	\begin{eqnarray*}
 	F(u, v)=
 		\begin{cases}
		-d(x_{1}, u), & \mathrm{if}\ v= y,\\
		-1, & \mathrm{if}\ u=x_{2}, v \neq y,\\
		0, & \mathrm{otherwise}.
		\end{cases}
 	\end{eqnarray*}
It is easy to check that $F$ satisfies $F(u_{1}, v_{1}) - F(u_{2}, v_{2}) \leq d((u_{1}, v_{1}), (u_{2}, v_{2}))$. By Proposition \ref{kantoro}, the 1-Wasserstein distance satisfies
 	\begin{eqnarray*}
 	W(m^{\alpha}_{(x_{1}, y)}, m^{\alpha}_{(x_{2}, y)}) \geq \cfrac{d_{G}+\alpha d_{H}}{d_{G}+ d_{H}}W(m^{\alpha'}_{x_{1}}, m^{\alpha'}_{x_{2}}) + (1 - \alpha) \cfrac{d_{H}}{d_{G}+ d_{H}},
 	\end{eqnarray*}
where $\alpha' = \alpha(d_{G} + d_{H})/(d_{G}+\alpha d_{H})$. Then we have
 	\begin{eqnarray*}
 	\kappa^{G\times H}_{\alpha}((x_{1}, y), (x_{2}, y)) \leq \cfrac{d_{G} + \alpha d_{H}}{d_{G}+ d_{H}}	\kappa^{G}_{\alpha'}(x_{1}, x_{2}).
 	\end{eqnarray*}
Since $(1-\alpha')/(1-\alpha)=d_{G}/(d_{G}+ \alpha d_{H})$, we obtain
 	\begin{eqnarray}
 	\label{product2}
 	\kappa^{G\times H}((x_{1}, y), (x_{2}, y)) &=& \lim_{\alpha \to 1}\cfrac{1}{1-\alpha} \kappa^{G\times H}_{\alpha}((x_{1}, y), (x_{2}, y)) \nonumber \\
 	&\leq& \cfrac{d_{G}}{d_{G}+d_{H}}\kappa^{G}(x_{1}, x_{2}).
  	\end{eqnarray}
By \eqref{product1} and \eqref{product2}, we have
 	\begin{eqnarray*}
 	\kappa^{G\times H}((x_{1}, y), (x_{2}, y)) = \cfrac{d_{G}}{d_{G}+d_{H}}\kappa^{G}(x_{1}, x_{2}).
 	\end{eqnarray*}
	Since $G$ is Ricci-flat by Theorem \ref{main0}, the proof is completed.
\end{proof}

\begin{corollary}
	\label{cor_regular}
Assume that $H$ satisfies the conditions of Theorem \ref{main0}. Then for any $d_{G}$-regular graph $G$, we have
 	\begin{eqnarray*}
 	\kappa^{G \times H}((x, y_{1}),( x, y_{2})) = 0,
 	\end{eqnarray*}
for $x \in V(G)$ and $(y_{1}, y_{2}) \in E(H)$.
\end{corollary}
Combining Theorem \ref{main1} and Corollary \ref{cor_regular}, we obtain the following :
\begin{corollary}
Assume that both $G$ and $H$ satisfy the conditions of Theorem \ref{main0}. Then $G \times H$ is Ricci-flat.
\end{corollary}

\begin{thebibliography}{99}
\bibitem{Ni} \textsc{C.-C. Ni, Y.-Y. Lin, J. Gao, D. Gu, E. Saucan}, Ricci curvature of the Internet topology, Proceedings of the IEEE Conference on Computer Communications, INFOCOM 2015, IEEE Computer Society (2015)
\bibitem{Jo2}\textsc{J. Jost and S. Liu}, Ollivier's Ricci curvature, local clustering and curvature-dimension inequalities on graphs, Discrete and Computational Geometry 51.2 (2014), 300--322.
\bibitem{Lin1}\textsc{Y. Lin, L. Lu and S. T. Yau}, Ricci-flat graphs with girth at least five, preprint.
\bibitem{Yau1}\textsc{Y. Lin, L. Lu and S. T. Yau}, Ricci curvature of graphs, Tohoku Math.\ J. 63 (2011) 605--627.
\bibitem{Yau2}\textsc{Y. Lin and S. T. Yau}, Ricci Curvature and eigenvalue estimate on locally finite graphs, Math. Res. Lett. 17 (2010) 343--356.
\bibitem{Ol1}\textsc{Y. Ollivier}, Ricci curvature of Markov chains on metric spaces, J. Functional Analysis. 256 (2009) 810--864.
\bibitem{Ol2}\textsc{Y. Ollivier}, A survey of Ricci curvature for metric space and Markov chains, Probabilistic approach to geometry 57 (2010) 343--381.
\bibitem{Tan}\textsc{A.Tannenbaum, C. Sander, L. Zhu, R. Sandhu, I. Kolesov, E. Reznik,Y. Senbabaoglu,and T. Georgiou}, Ricci curvature and robustness of cancer networks, arXiv preprint arXiv:1502.04512 (2015).
\bibitem{Vi1}\textsc{C. Villani}, Topics in Mass Transportation, Graduate Studies in Mathematics, Amer. Mathematical Society 58 (2003).
\bibitem{Vi2}\textsc{C. Villani}, Optimal transport, Old and new, Grundlehren der Mathematishen Wissenschaften 338, Springer, Berlin (2009).
\bibitem{Wa1}\textsc{T. Washio, and H. Motoda}, State of the art of graph-based data mining, Acm Sigkdd Explorations Newsletter 5.1 (2003) 59--68.
\bibitem{Wat1}\textsc{D. J. Watts and S. H. Strogatz}, Collective dynamics of $`$small-world' networks，Nature 393 (1998) 440--442.
\bibitem{Wu1}\textsc{W. Xindong, et al.}, Top 10 algorithms in data mining, Knowledge and Information Systems 14.1 (2008) 1--37.

\end{thebibliography}
\end{document}